\newtheorem{definition}{Definition}
\newtheorem{lemma}{Lemma}
\newtheorem{theorem}{Theorem}
\newtheorem{proposition}{Proposition}
\newtheorem{axiom}{Axiom}
\DeclareMathOperator{\subjectto}{subject to}
\renewcommand{\reviseddate}[1]{}
\begin{document}

\sptitle{Position Paper}

\title{Welfare and Cost Aggregation for Multi-Agent Control: When to Choose Which Social Cost Function, and Why?}

\author{Ilia Shilov* \affilmark{1}}
\author{Ezzat Elokda* \affilmark{1}}
\author{Sophie Hall* \affilmark{1}}
\author{Heinrich H. Nax \affilmark{2}}
\author{Saverio Bolognani \affilmark{1}}

\affil{Automatic Control Laboratory, ETH Zurich, 8092 Zurich, Switzerland}
\affil{Zurich Center for Market Design \& SUZ, University of Zurich, 8050 Zurich, Switzerland}

\authornote{* Equal contribution. \\
This work was supported by the NCCR Automation, a National Centre of Competence in Research, funded by the Swiss National Science Foundation (grant number 51NF40\textunderscore225155).}

\corresp{CORRESPONDING AUTHOR: Ilia Shilov (e-mail: ishilov@ethz.ch)}

\begin{abstract}
Many multi-agent socio-technical systems rely on aggregating heterogeneous agents’ costs into a social cost function (SCF) to coordinate resource allocation in domains like energy grids, water allocation, or traffic management. The choice of SCF often entails implicit assumptions and may lead to undesirable outcomes if not rigorously justified. In this paper, we demonstrate that what determines which SCF ought to be used is the degree to which individual costs can be compared across agents and which axioms the aggregation shall fulfill. Drawing on the results from social choice theory, we provide guidance on how this process can be used in control applications. We demonstrate which assumptions about interpersonal utility comparability - ranging from ordinal level comparability to full cardinal comparability - together with a choice of desirable axioms, inform the selection of a correct SCF, be it the classical utilitarian sum, the Nash SCF, or maximin.  Thus, fixing comparability level first, then choosing an objective from the compatible class, and reporting both as part of the specification, makes the fairness and efficiency consequences transparent. We demonstrate how the proposed framework can be applied for principled allocations of water, transportation, and energy resources. 

\end{abstract}

\begin{IEEEkeywords}
multi-agent control, social cost function, resource allocation, utilitarian, Nash, maximin, interpersonal comparability
\end{IEEEkeywords}

\maketitle

\section{Introduction}
\label{sec:intro}

Multi-agent socio-technical control applications with heterogeneous agents arise in various domains, such as energy grids, traffic control, water distribution, and bandwidth allocation~\cite{CSSroadmap}. In these systems, the control objective in the form of a social cost function (SCF) is typically context-specific, and it often involves some cost or utility aggregation across agents, reflecting agents’ different goals, needs, and operational constraints. When performance indicators (e.g., energy consumption, travel time, bandwidth usage) are measurable and objective, one might quite straightforwardly design an SCF such as total cost or throughput. However, when agents’ costs represent subjective, agent-specific valuations (e.g. electricity
needs of a hospital and a household, value of time for a population of travelers with heterogeneous income etc.), it is often unclear if these costs can be compared  on a common scale, leading to difficulties and ambiguities in aggregating these costs and defining SCFs. 

Several candidate SCFs, including the sum (or weighted sum) of costs, Nash social cost, and min-max objectives, exist in the literature \cite{caragiannis2019mnw, ramezani_endriss_2010, radunovic2007unified, bertimas2011priceoffairness}, with the choice among these criteria usually depending on desired properties such as tractability, fairness, and robustness. The most popular approach to aggregate individual costs is the \emph{classical utilitarian} rule, which sums individual costs into a single objective. While intuitive and computationally tractable, this approach assumes that the cost for one agent is commensurate with the cost of another. In practice, this is a strong comparability assumption which can lead to unintended consequences \cite{guide2023fairness}, e.g., disproportionally high wait times for ride-hails in remote areas~\cite{maciejewski2016assignment}, discrimination against certain train types or routes in real-time train re-scheduling~\cite{luan2017non}, or exacerbation of energy poverty~\cite{sousa2019peer}.

Exact \emph{full comparability} of the costs incurred by different agents can be impeded by various reasons:
\begin{itemize}
\item Costs may be subjective and not straightforward to compare (e.g., perceived value of travel time, which depends on socio-economic and circumstantial factors\cite{zamparini2007meta,liu2024adaptive}).
\item Costs may be difficult to quantify because they depend on complex models and unknown parameters (e.g., the yield of a farm as a function of the water allocated to it) or because they include information that is not available in the decision process.
\item The comparability framework is an elegant way to encode fairness considerations, i.e., what elements of agents' costs are deemed relevant to the decision problem at hand (e.g., a deliberate decision not to account for traffic delays accumulated outside the road network of interest).
\item When costs are self-reported, they are vulnerable to strategic manipulation (e.g., customers declaring their electricity needs \cite{muthirayan2019mechanism}).
\end{itemize}


\begin{figure}[t]
\centering
\includegraphics[width=0.9\columnwidth]{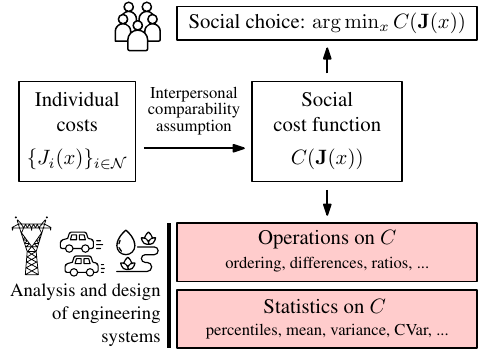}
\caption{Uses of social cost functions in the analysis and design of engineering systems.}
\label{fig:what-to-do-with-W}
\end{figure}

The central aim of this paper is to offer an axiomatic viewpoint on how to properly aggregate agents' costs in multi-agent decision and control depending on their comparability.
By reviewing, adapting, and extending the main concepts of \emph{welfarism} and \emph{interpersonal comparability} to the specific context of multi-agent control, 
this paper develops and recommends a design procedure that comprises the following steps (Figure~\ref{fig:what-to-do-with-W}):
\begin{enumerate}
    \item \textbf{State the level of interpersonal comparability}. 
    Such a decision is guided by how subjective agents' preferences are and how well they can be measured, but it can also be a deliberate decision guided by politics, social agreements, and public sentiment.

    \item \textbf{Select the appropriate social cost function}.
    Based on the determined comparability level, the choice of SCF is  restricted to a specific functional form.

    \item \textbf{Align the control design to the social cost function}.
     The design of the sociotechnical system must be guided uniquely by the explicit optimization of the selected SCF. 
     Criteria that cannot be expressed through the SCF (e.g., rights, fairness constraints, external moral values, distributional constraints) should not be employed.
    %
     
\end{enumerate}
We urge control engineers to adopt this rigorous and explainable process in all application domains where the design is guided by social cost/welfare functions \cite{Munoz2023}.
The mathematical guarantees that we provide can be incorporated in the design of system-wide metrics that are now being used in multi-agent control, like Price of Anarchy \cite{Chandan2024,Zhang2018,Piliouras2017,Wang2017,hill2023tradeoff}.
They can also guide the design of optimization dynamics \cite{jadbabaie2009distributed,Menon2014,romvary2021proximal,Camisa2020OptNash} and resource allocation mechanisms \cite{Farhadi2019,Maheswaran2004}.
Finally, this work responds to the growing interest in fairness in control~\cite{CSSroadmap,jalota2021efficiency,villa2023fair,bang2024mobility,elokda2024carma,khargonekar2024climate,elokda2025vision}, as many fairness notions arise naturally from careful considerations of the level of interpersonal comparability~\cite{elokda2025vision}. 
This approach can also clarify the implicit fairness assumptions in existing game-theoretic solution concepts, such as the variational Generalized Nash Equilibrium, which has been shown to rely on a strong, and often unstated, comparability assumption that, if violated, can lead to undesirable ``fairness'' outcomes \cite{hall2025limits}.

The aforementioned three-step procedure can also be interpreted in reverse: if a specific notion of efficiency/fairness is desired (utilitarian, min/max, etc.) then a sufficient level of comparability needs to be achieved, which may require the collection of additional information from the agents, the design of a manipulation-safe mechanism, or the use of more accurate cost models.
This may give a principled perspective on how freely agents' costs can be designed and engineered \cite{Marden2013,Jensen2018, Marden2014}.

The third step of the procedure consists of using the selected SCF for control design. 
These SCFs are functionally similar to some objectives used in multi-objective optimization \cite{Liu2001MOOOptandControl,Chinchuluun2007MOOsurvey}, and specifically in multi-objective control design problems \cite{Gambier2007MOOoverview,Gambier2008MPCandPIDMOO}.
For example, linear-quadratic regulators can be designed to minimize weighted-sum \cite{DeNicolao1993UtopianLQ}, 
minmax \cite{Li1990MinmaxLQ}, and proportional \cite{Shtessel1996proportionalLQR} costs.
Similar cost functions can be incorporated in receding horizon control decisions
\cite{DeVito2007Recedinghorizon,Bemporad2009MultiobjectiveMPC,Zavala2012Utopiatracking} and in computational control design tools 
\cite{Reynoso2017controllerTuning}.
While these works do not address the aggregation of costs across multiple agents, they offer methods and algorithms for implementing social choice in practice.

The remainder of this paper is structured as follows. 
In Section \ref{sec:welfarism}, we set the stage for a principled \emph{welfarist} approach based on social choice theory. 
In Section \ref{sec:aggregate-utilities} we show how preferences can be properly aggregated into a SCF based on their level of interpersonal comparability. 
In Section~\ref{sec:examples} we illustrate the proposed design process via three examples of sociotechnical systems.
Section~\ref{sec:conclusions} concludes the paper.
\section{Welfarism}
\label{sec:welfarism}

In this section we introduce preliminaries adopted from social choice theory \cite{DAspremont1977, roberts_interpersonal_1980, dAspremontGevers2002} that allow a rigorous aggregation of individual costs and derivation of social cost functions. We lay out the foundations of a so called ``welfarist'' approach, that requires that all the relevant information for the social decision is contained in the agents' cost functions.

\subsection{Preliminaries and Axiomatic Foundations}
\label{sec:notation}
\label{subsec:agents}

Let $\mathcal{N} = \{1,2,\dots,n\}$ be a finite set of agents and let $\mathcal{X}$ denote the set of feasible outcomes $x$.
An outcome can be a specific allocation of a scarce resource or any control decision that affects the agents. Each agent $i$ attaches a (possibly negative) \emph{cost} $J_i : \mathcal X \to \mathbb R$ to every outcome, with $J_i(\cdot) \in \mathcal{J}$, where $\mathcal{J}$ is the set of all real-valued functions on $\mathcal X$.
We let $\mathbf{J}=(J_1,\dots,J_n)$ denote a \emph{profile} of such costs for the entire set of agents. When formulating a resource allocation problem that depends on agents' individual evaluations represented by $\mathbf{J}$, one needs to define a single \emph{social preference relation} $\succsim_{\mathbf{J}}$ on  $\mathcal{X}$, so as to capture the collective or ``social'' viewpoint. We first define a mapping between cost profiles of the agents and the resulting social preference, which can be expressed using Social Cost Functionals (SCFL):

\begin{definition}
\label{def:SWFL}
Let $\mathcal{R}$ be the set of all complete and transitive binary relations on $\mathcal{X}$.
A \emph{Social Cost Functional (SCFL)} is a mapping
\begin{equation*}
    \mathfrak F \;:\; \mathcal{J}^n \;\longrightarrow\; \mathcal{R},
\end{equation*}
that assigns a \emph{social preference relation} $\succsim_{\mathbf{J}} = \mathfrak{F}(\mathbf{J})$ on $\mathcal{X}$ to each profile $\mathbf{J}=(J_1,\dots,J_n)\in \mathcal{J}^n$.
\end{definition}

Before determining how to incorporate each agent's evaluation into a collective decision, one should first specify which \emph{fundamental properties (or axioms)} this social preference relation $\succsim_{\mathbf{J}}$ needs to satisfy. The following two classical well-established properties can be expected from such a social preference relation.

\begin{axiom}[Weak Pareto Principle \textbf{(P)}]
For any profile $\mathbf{J}$ and any $x,y\in \mathcal{X}$, 
if $J_i(x) < J_i(y)\text{ for all }i$ 
then 
$x \succ_{\mathbf{J}} y$. 
\end{axiom}

\begin{axiom}[Independence of Irrelevant Alternatives \textbf{(IIA)}]
For any two distinct outcomes $x,y\in\mathcal{X}$ 
and any two profiles $\mathbf{J},\mathbf{J}'\in\mathcal{J}^n$ such that 
$J_i(x)=J_i'(x)$ and $J_i(y)=J_i'(y)$ for each $i$, 
we require
\begin{equation*}
    x \succsim_{\mathbf{J}} y 
    \;\;\Longleftrightarrow\;\;
    x \succsim_{\mathbf{J}'} y,
    \quad
    \text{and}
    \quad
    x \succ_{\mathbf{J}} y 
    \;\;\Longleftrightarrow\;\;
    x \succ_{\mathbf{J}'} y.
\end{equation*}
In other words, changing the costs of \emph{other} outcomes does not affect the pairwise social ranking of $x$ and $y$. 
\end{axiom}

In control applications, it is convenient to work with real-valued measures of social cost rather than a social ordering of outcomes. This requires an additional mild continuity assumption, which postulates that if an outcome $x$ is socially strictly preferred over $y$, it should remain so under a small enough perturbation of the individual costs. As we show in Section~\ref{subsec:SWF-classes}, this condition is easily verified for the setting considered in this work.

\begin{definition}[Pairwise Continuity \textbf{(PC)} \cite{hammond2023roberts}]
    Let $\mathbb{R}_{++}$ denote the set of strictly positive real numbers. For every $\varepsilon \in \mathbb{R}_{++}^N$, there exists $\varepsilon' \in \mathbb{R}_{++}^N$ such that for every profile $\mathbf{J}$ and every pair $x,y \in \mathcal X$ with $x\,\succ_{\mathbf{J}}\,y$, there exists a profile $\tilde{\mathbf{J}}$ satisfying $\tilde{\mathbf{J}}(x) \geq \mathbf{J}(x) + \varepsilon'$ and $\tilde{\mathbf{J}}(y) \leq \mathbf{J}(y) + \varepsilon$, and such that $x\,\succ_{\tilde{\mathbf{J}}}\,y.$
\end{definition}

In some cases it may be desirable to enforce an additional axiom that prohibits distinguishing individual agents.

\begin{axiom}[Anonimity \textbf{(A)}]
Let \(\pi: \mathbb{N} \to \mathbb{N}\) be a permutation (i.e., a bijection on \(\mathbb{N}\)). 
If, for all \(i \in \mathbb{N}\) and \(x \in X\), $J'_i(x) = J_{\pi(i)}(x)$ then $\succsim_{\mathbf{J}} = \succsim_{\mathbf{J}'}$.
\end{axiom}

\subsection{Welfarism}

If we accept these axioms \textbf{(P)} and \textbf{(IIA)} as reasonable and impose them on our social preference relation, together with condition \textbf{(PC)}, we can establish a \emph{welfarist principle} \cite{Sen1979_welfarism,DAspremont1977}, that implies that any complete and transitive social ordering can be expressed through a \emph{Social Cost Function (SCF)} $C$ that depends only on the preference profile $\mathbf{J}$.

\begin{proposition}[Welfarism; Thm 3.7, \cite{dAspremontGevers2002}, Thm 1, \cite{roberts_interpersonal_1980}]
\label{pro:welfarism}
Let $\succsim_{\mathbf{J}}$ be a social preference relation on $\mathcal{X}$ 
defined by SCFL $\mathfrak{F}$ for any profile $\mathbf{J}=(J_1,\dots,J_n) \in \mathcal{J}^n$. Suppose $\succsim_{\mathbf{J}}$ satisfies \textbf{(P)}, \textbf{(IIA)} and \textbf{(PC)}. Then there exists a continuous Social Cost Function (SCF) 
\begin{equation*}
    C : \mathbb{R}^n \,\to\, \mathbb{R}
\end{equation*}
such that for any $x,y\in\mathcal{X}$, 
\begin{equation*}
    x \succsim_{\mathbf{J}} y \,\, \Leftrightarrow \,\, C(J_1(x),\dots,J_n(x)) \,\,  \leq \,\, C(J_1(y),\dots,J_n(y)).
\end{equation*}
\end{proposition}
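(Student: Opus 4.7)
The plan is to establish the result in the classical order: first prove a \emph{welfarism lemma} that collapses the ordering on $\mathcal X$ into an ordering on cost-vectors in $\mathbb R^n$, then use $\textbf{(PC)}$ to upgrade this ordering to a continuous one, and finally invoke a Debreu-type representation theorem to obtain the numerical SCF $C$.

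Step 1 (Welfarism lemma). Using $\textbf{(P)}$ and $\textbf{(IIA)}$, I would show that for any two profiles $\mathbf J,\mathbf J'\in\mathcal J^n$ and any outcomes $x,y,x',y'\in\mathcal X$ with $(J_1(x),\dots,J_n(x))=(J'_1(x'),\dots,J'_n(x'))$ and $(J_1(y),\dots,J_n(y))=(J'_1(y'),\dots,J'_n(y'))$, one has $x\succsim_{\mathbf J}y\Leftrightarrow x'\succsim_{\mathbf J'}y'$. The standard trick is to construct an auxiliary profile $\tilde{\mathbf J}$ that agrees with $\mathbf J$ on $\{x,y\}$ and with $\mathbf J'$ on $\{x',y'\}$ and then apply $\textbf{(IIA)}$ twice to transfer the pairwise ranking; $\textbf{(P)}$ is used to break ties when some of the four points coincide or when an auxiliary third outcome is needed to force a strict ranking. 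This is the main substantive step and the place where the richness of the domain $\mathcal J^n$ (all real-valued functions admissible) is essential.

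Step 2 (Induced order on $\mathbb R^n$). I would then define a binary relation $R^\ast$ on $\mathbb R^n$ by
\begin{equation*}
u\,R^\ast\,v \;\Longleftrightarrow\; \exists\,\mathbf J,\,x,y:\; \mathbf J(x)=u,\;\mathbf J(y)=v,\;\text{and } y\succsim_{\mathbf J}x,
\end{equation*}
so that smaller cost-vectors are ranked higher, matching the cost convention. The welfarism lemma makes $R^\ast$ well-defined; completeness and transitivity of $R^\ast$ are inherited from the corresponding properties of $\succsim_{\mathbf J}$, and $\textbf{(P)}$ becomes the monotonicity statement $u<v$ componentwise $\Rightarrow u\,R^\ast\,v$ strictly.

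Step 3 (Continuity and representation). Next I would translate $\textbf{(PC)}$ into topological continuity of $R^\ast$ on $\mathbb R^n$: the axiom is precisely tailored so that strict preference survives sufficiently small perturbations of the cost vectors, which is exactly openness of the strict upper and lower contour sets. Since $\mathbb R^n$ is a connected, separable metric space and $R^\ast$ is a continuous complete preorder, Debreu's representation theorem yields a continuous $\tilde C:\mathbb R^n\to\mathbb R$ with $u\,R^\ast\,v\Leftrightarrow \tilde C(u)\ge \tilde C(v)$; setting $C:=-\tilde C$ gives the claimed equivalence in cost form.

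The main obstacle is Step 1: the IIA-based transfer requires careful case analysis when the four outcomes are not all distinct, and relies crucially on being able to freely construct intermediate profiles in $\mathcal J^n$. Once the welfarism lemma is in hand, Steps 2--3 are largely routine, with Debreu's theorem used as a black box.
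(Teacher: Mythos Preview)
The paper does not supply its own proof of this proposition; it is stated as a known result with attribution to d'Aspremont--Gevers and Roberts (and the \textbf{(PC)} condition to Hammond), and the text resumes immediately after the statement. Your three-step outline---a strong-neutrality lemma from \textbf{(IIA)} plus Pareto, lifting to an order $R^\ast$ on $\mathbb R^n$, and a Debreu representation under continuity---is exactly the architecture of those cited proofs, so there is no in-paper argument to compare against.

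One caveat worth flagging in your Step~1: you claim the full bi-implication (strong neutrality) from \textbf{(P)} and \textbf{(IIA)} alone via the auxiliary-profile trick, but the paper's \textbf{(P)} is only \emph{Weak} Pareto. The standard auxiliary-profile argument delivers strong neutrality directly under Pareto Indifference or Strong Pareto; with merely Weak Pareto the classical route is Roberts' one, in which a continuity condition is already used to upgrade the one-sided implication (strict dominance in $\mathbb R^n$ $\Rightarrow$ strict social preference) to the two-sided representation. In other words, \textbf{(PC)} is doing substantive work earlier than your Step~3 suggests, not just furnishing topological continuity for Debreu. The overall plan is sound, but the division of labor between the axioms should be adjusted accordingly.
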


The SCF $C$ thus represents the SCFL $\mathfrak{F}$, and \emph{all} relevant information for ranking outcomes is contained in the vector $\mathbf J(x) = \bigl(J_1(x),\dots,J_n(x)\bigr)$ of agents' costs for each outcome $x$.
The \emph{welfarist} approach prescribes that the social choice must be guided exclusively by the social cost function $C(\mathbf J(x))$, rather than by any other characteristics of the decision $x$.

As mentioned in the introduction, several well-known examples of SCFs $C$ have been proposed to aggregate individual costs. For instance, the \emph{utilitarian} approach, often attributed to Bentham \cite{bentham_introduction_1789} as later formalized by Harsanyi \cite{harsanyi_cardinal_1955}, aggregates costs via a sum: $C_{\mathrm{util}}(J_1,\dots, J_n)=\sum_{i=1}^n J_i$. By contrast, the \emph{Rawlsian} or \emph{min-max} rule, inspired by Rawls \cite{rawls_theory_1971}, focuses on the well-being of the worst-off agent: $C_{\mathrm{Rawls}}=\max_{i\in\mathcal{N}} J_i$. Another example is the \emph{Nash Social Welfare} function \cite{nash_bargaining_1950, kaneko1979nash}, which is defined as $C_{\mathrm{Nash}}(J_1,\dots,J_n)=-\prod_{i=1}^n (-J_i)$. Notably, these three canonical SCFs can all be viewed as members of the family of Hölder means. 
Generally, a large body of literature guides the selection of a SCF based on desirable notions of fairness, equity (e.g. envy-freeness and its relaxations, proportional fairness, Pigou-Dalton principle etc.), efficiency, or some trade-off between the two,  reflected in measures such as maximin share (MMS) guarantee, alpha-fairness, or statistical inequality indices (e.g., the Gini coefficient, coefficient of variation, or Hoover index) \cite{moulin_fair_2003, bertimas2011priceoffairness, guide2023fairness, bogomolnaia_competitive_2017}. Instead, the focus of this work is to provide a ``first-principles'' way to make this choice by first considering the fundamental assumptions of \emph{interpersonal comparability} of the agents' costs, which are needed to properly define ``efficiency'' and ``fairness'' in the first place, and from which the appropriate aggregation rule naturally follows.


\section{Cost Aggregation and Comparability}
\label{sec:aggregate-utilities}

The \emph{welfarist} principle (Proposition~\ref{pro:welfarism}) tells us 
that any social ordering under \textbf{(P)}, \textbf{(IIA)} and \textbf{(PC)} must be representable by a SCF that depends solely on the costs incurred by the agents. One must still decide \emph{which} $C$ is appropriate in practice. Such a choice needs to be guided by a deliberate decision on \emph{how} individual costs can be \emph{measured and compared} across agents, so that meaningful interpersonal trade-offs can be made in the social choice. Addressing the \emph{interpersonal comparability} issue corresponds to deciding which transformations of the individual costs leave the social ranking unchanged \cite{DAspremont1977,roberts_interpersonal_1980}. By restricting or expanding the class of \emph{invariance transformations} of the individual costs, one effectively selects a level of  \emph{comparability} for the agents' costs. Once the comparability assumptions are fixed, the shape of the SCF $C$ is essentially pinned down. 


\subsection{Comparability Levels}
\label{subsec:E-comparability}

\begin{definition}[Invariance Transformations under a SCFL]
Let $\mathfrak{F} : \mathcal{J}^n \to \mathcal{R}$ 
be a social cost functional. 
A vector of transformations $\{\phi_i \in \Phi : i \in \mathcal{N}\}$ is called an \emph{invariance transformation} under $\mathfrak{F}$ if, for any cost profile $\mathbf{J}=(J_1,\dots,J_n)$, the transformed profile
$$
\mathbf{J}' = (\phi_1 \circ J_1,\dots,\phi_n \circ J_n)
$$
is equivalent to $\mathbf{J}$ under $\mathfrak{F}$, meaning that for all $x,y \in \mathcal{X}$,
$$
x \succsim_{\mathbf{J}} y \quad \Longleftrightarrow \quad
x \succsim_{\mathbf{J}'} y.
$$
\end{definition}

We present four \emph{levels of interpersonal comparability}, defined via the corresponding invariance transformations.
Notice that by imposing a specific invariance condition, we are equivalently defining an information filter that specifies what features of the individual cost profiles are relevant for the social ordering. 
For each comparability level, we comment on what kind of equivalence they induce on the costs $J_i$ in terms of comparability.

\begin{description}

\item[Ordinal Level Comparability (OLC).]
The costs  are determined up to any \emph{common} strictly increasing transformation $\phi_{\text{\tiny OLC}}$:
\begin{equation*}
    J_i'(x) = \phi_{\text{\tiny OLC}}\bigl(J_i(x)\bigr)
\end{equation*} 
We cannot compare cost increments, but it is possible to order the costs incurred by different agents.

\smallskip

\item[Cardinal Non-Comparability (CNC).]
The costs are determined up to \emph{distinct} positive affine transformations:
\begin{equation*}
    J_i'(x) = \phi_{\text{\tiny CNC}}(J_i(x)) :=  a_i\,J_i(x) \;+\; b_i,
\quad a_i>0,
\end{equation*}
i.e., each agent has its own scale and origin. We can compare increments \emph{within} a single agent’s cost, but we cannot compare costs or increments across agents.

\item[Cardinal Unit Comparability (CUC).]
The costs are determined up to any \emph{common} scale factor $a$, but may have \emph{distinct} offsets $b_i$:
\begin{equation*}
    J_i'(x) = \phi_{\text{\tiny CUC}} (J_i(x)) :=  a\,J_i(x) \;+\; b_i, \quad a>0,
\end{equation*}
We can compare increments in cost across agents; however, absolute costs across agents are not comparable.

\smallskip

\item[Cardinal Full Comparability (CFC).]
The costs are determined up to \emph{common} positive affine transformations:
\begin{equation*}
    J_i'(x) = \phi_{\text{\tiny CFC}} (J_i(x)) := a\,J_i(x) \;+\; b, \quad a>0.
\end{equation*}
All agents share a single absolute scale, so both cost levels \emph{and} increments are comparable.

\end{description}

\begin{figure}[tb!]
    \centering
    \includegraphics[width=0.6\columnwidth]{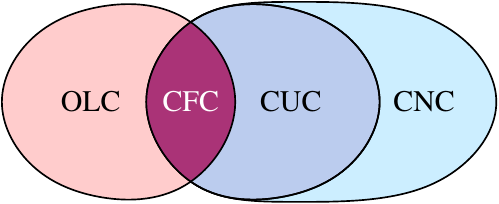}
    \caption{Venn diagram of the interpersonal comparability levels. Cardinal Full Comparability (CFC) is often unwittingly assumed, with unintended fairness consequences.}
    \label{fig:venn}
\end{figure}

Figure~\ref{fig:venn} illustrates the hierarchy of these comparability levels.
One should carefully consider what comparability level and associated invariance condition are justified in a given socio-technical control setting.
Too little comparability may prevent meaningful trade-offs, whereas too much comparability might overstate the legitimacy of comparisons across agents whose costs are inherently heterogeneous.

\subsection{Permissible Social Cost Functions}
\label{subsec:SWF-classes}

\def\iconscale{0.6}
\newlength{\iconsize}
\setlength{\iconsize}{67pt}

\begin{table*}[bt]
    \centering
    \renewcommand{\arraystretch}{2}
    \setcellgapes{3pt}
    \makegapedcells
    \begin{tabular}{|c|c|c|c|c|}
    \hline
        \textbf{Comparability} & OLC & CNC & CUC & CFC \\
        \hline
        \textbf{Invariant tf.} & incr. $\phi_\text{OLC}(J_i)$ 
         & $\phi_{\text{\tiny CNC}}(J_i) = a_i J_i + b_i$
        & $\phi_{\text{\tiny CUC}}(J_i) = a J_i + b_i$
        & $\phi_{\text{\tiny CFC}}(J_i) = a J_i + b$ \\
        \hline
        \textbf{SCF $C(x)$} 
         & $\max\limits_{i} J_i(x)$
         & \begin{tabular}{c}
            \makecell{
            $- \prod_{i} \left[J_i(x_0) - J_i(x)\right]^{c_i}$\\[1mm]
            \footnotesize with benchmark $x_0$}\\
            \makecell{$-\prod_{i} \left[ -J_i(x) \right]^{c_i}$\\[1mm]
            \footnotesize with $b_i=0$ \\and negative costs}
         \end{tabular}
         & $\sum\limits_{i} c_i\,J_i(x)$
         & \makecell{$\frac{1}{n}\sum\limits_{i} J_i(x)+ g\left( 
\left[\begin{smallmatrix}
    J_1(x) - \frac{1}{n}\sum\limits_{i} {J_i}(x) \\
    \vdots \\
    J_n(x) - \frac{1}{n}\sum\limits_{i} {J_i}(x)
\end{smallmatrix}\right]
\right)$\\[6mm]
         $g$ homogeneous of degree 1}
         \\
         \hline
         \begin{minipage}[b]{18mm}
            \centering
            \bf Allowed operations on the SCF $C(x)$\\
            ~
         \end{minipage} 
         & 
         \includegraphics[scale=\iconscale]{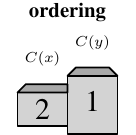} & 
          \includegraphics[scale=\iconscale]{figures/op-ordering.pdf}\hspace{3mm}
            \includegraphics[scale=\iconscale]{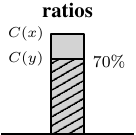}&
         \multicolumn{2}{c|}{
            \includegraphics[scale=\iconscale]{figures/op-ordering.pdf}\hspace{3mm}
            \includegraphics[scale=\iconscale]{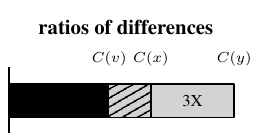}
         }\\
         \hline 
    \end{tabular}
    \medskip 
    \caption{Synoptic table representing the appropriate SCF and SCF operations for each comparability level (Theorem~\ref{thm:swf-classification}).}
    \label{table}
\end{table*}

Classical results in the literature — most notably by Sen \cite{sen_interpersonal_1970,Sen1979_welfarism}, d’Aspremont and Gevers \cite{dAspremontGevers2002}, and Roberts \cite{roberts_interpersonal_1980} — demonstrate that if a Social Cost Functional (SCFL) satisfies \textbf{(P)} and \textbf{(IIA)}, then its form is uniquely determined by the level of interpersonal comparability assumed. We first provide a technical result informally stated in \cite{hammond2023roberts}, then formalize the SCF choice in  Theorem~\ref{thm:swf-classification}, summarizing results from the literature above in the formalism of our paper (see Figure~\ref{fig:tikzdiagram} for a visual guide and Table~\ref{table} for a synoptic table).

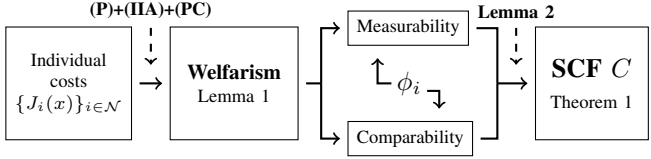
\begin{figure}[b]
  \centering
  \begin{tikzpicture}[node distance=2cm, auto]
    \tikzset{myarrow/.style={->, thick, shorten >=1mm, shorten <=1mm}}
    
    \node[draw, rectangle, align=center, minimum width=1.5cm, minimum height=1.8cm, font=\footnotesize] (A) 
      {\textbf{Individual} \\
      \textbf{costs} \\[1mm]
      $\{J_i(x)\}_{i \in \mathcal{N}}$};
    
    \node[draw, rectangle, align=center, right=2.2cm of A, minimum width=1.8cm, minimum height=1.8cm, font=\footnotesize] (B)
      {\textbf{Welfarism:} \\ 
      \textbf{SCF}\\$C(\mathbf{J}(x))$\\
      {\scriptsize Proposition \ref{pro:welfarism}}};
    
    \draw[myarrow] (A) -- (B);
    
    \path (A) -- (B) coordinate[midway] (midpointAB);
    \coordinate (midAB) at ($(midpointAB)+(2mm,0)$);

    \node[above=5mm of midAB, font=\scriptsize, anchor=south] (labelAB) {\textbf{(P) + (IIA)}};

    \node[draw, rectangle, align = center, right=0.7cm of B, minimum width=1.8cm, minimum height=1.5cm,font=\footnotesize] (C) {\textbf{Choice of}\\\textbf{SCF} $C$ \\ {\scriptsize Theorem \ref{thm:swf-classification}}};

    \node[draw, rectangle, minimum width=24mm, text centered, below=0.2cm of B] (D) {\footnotesize \textbf{Comparability}};

    \node[align=right,below right=-3mm and -7mm of D.west, font=\scriptsize, anchor=east] (labelPC) {\textbf{(PC)}\\Lemma 1};
    
    \draw[myarrow, dashed] (D) -| (midAB);
    
    \draw[myarrow, dashed] (D) -| (C);

    \draw[myarrow, dashed] (labelAB.south) -- (midAB.south);

    \draw[myarrow] (B) -- (C);

    \node[below=1mm of D.south] (spacer) {};
\end{tikzpicture}
  \caption{Guide to the results of Sections~\ref{sec:welfarism} and \ref{sec:aggregate-utilities}.}
  \label{fig:tikzdiagram}
\end{figure}

\begin{lemma}\label{lemma:PC}
    \textbf{(OLC), (CNC), (CUC), (CFC)} imply Pairwise Continuity \textbf{(PC)}.
\end{lemma}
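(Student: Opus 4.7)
My plan is to exhibit, for each of the four comparability levels, an explicit invariance transformation realizing a common positive translation of the cost profile, and then to check that this translation serves as the perturbed profile $\tilde{\mathbf J}$ demanded by \textbf{(PC)}. The central observation is that every one of the four classes contains pure translations as a subclass of its invariance transformations: for \textbf{(CFC)} the map $\phi_{\text{\tiny CFC}}(t)=t+b$ (with $a=1$) is admissible; for \textbf{(CUC)} and \textbf{(CNC)} the choice $a=1$ (resp.\ $a_i=1$) together with a translation $b$ (resp.\ $b_i$) is admissible; and for \textbf{(OLC)} the map $t\mapsto t+b$ is a (common) strictly increasing transformation. Hence a single construction, namely the common shift, serves all four invariance classes simultaneously.

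The construction proceeds as follows. Given $\varepsilon\in\mathbb{R}_{++}^{N}$, set $c:=\min_{i}\varepsilon_{i}>0$ and define $\varepsilon':=c\cdot\mathbf{1}\in\mathbb{R}_{++}^{N}$, noting that $\varepsilon'$ depends on $\varepsilon$ only, as required by \textbf{(PC)}. For any profile $\mathbf{J}$ and any pair $x,y\in\mathcal X$ with $x\succ_{\mathbf{J}} y$, define the perturbed profile by $\tilde J_{i}:=J_{i}+c$ for all $i\in\mathcal N$. By the observation above, $(\phi_{1},\dots,\phi_{n})$ with $\phi_{i}(t)=t+c$ is an invariance transformation for each of \textbf{(OLC)}, \textbf{(CNC)}, \textbf{(CUC)}, \textbf{(CFC)}, so the social preference is preserved: $x\succ_{\tilde{\mathbf J}} y$. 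Regarding the magnitude bounds, componentwise $\tilde J_{i}(x)-J_{i}(x)=c=\varepsilon'_{i}$, so $\tilde{\mathbf J}(x)\ge \mathbf J(x)+\varepsilon'$, and $\tilde J_{i}(y)-J_{i}(y)=c\le \varepsilon_{i}$ by the choice of $c$, so $\tilde{\mathbf J}(y)\le \mathbf J(y)+\varepsilon$.

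The main obstacle is essentially interpretive rather than technical: one must recognize that \textbf{(PC)} requires $\varepsilon'$ to be uniform in both the profile $\mathbf{J}$ and the pair $(x,y)$, and that it is the \emph{same} $\varepsilon'$ that must work across all four comparability levels if one wishes to give a unified argument. Once one observes that the constant translation lies in the intersection of all four invariance classes, the proof collapses to an immediate verification; no continuity of $C$ is invoked, because invariance by itself guarantees that strict preference is exactly preserved under the perturbation. If desired, the proof can be refined to handle each level with a tighter choice (e.g.\ $\varepsilon'=\varepsilon$ for \textbf{(CNC)} and \textbf{(CUC)} by choosing distinct shifts $b_{i}=\varepsilon_{i}$), but the uniform choice $c=\min_{i}\varepsilon_{i}$ already suffices to establish the lemma.
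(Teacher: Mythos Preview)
Your proof is correct and essentially identical to the paper's own argument: both take $\varepsilon'=(\min_i\varepsilon_i)\mathbf{1}$, apply the common additive shift $\tilde J_i=J_i+\min_i\varepsilon_i$, observe that this shift is an invariance transformation for all four comparability levels, and verify the two componentwise inequalities. The only differences are cosmetic (your $c$ versus the paper's $\alpha$) and your added remark that distinct shifts would also work for \textbf{(CNC)}/\textbf{(CUC)}, which is a nice observation but not needed.
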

\begin{proof}
Let $\varepsilon=(\varepsilon_1,\ldots,\varepsilon_n)\in\mathbb{R}_{++}^n$ and set $\alpha=\min_i \varepsilon_i$. 
Define a common shift $\tilde J_i(x)=J_i(x)+\alpha$ for all $i$ and all $x\in\mathcal X$. 
Invariance under common additive shifts yields $x\succ_{\mathbf J} y \Rightarrow x\succ_{\tilde{\mathbf J}} y$ for every pair $x,y\in\mathcal X$. 
Let $\varepsilon'=\alpha \mathbf 1$. Then, component-wise, $\tilde{\mathbf J}(x)\ge \mathbf J(x)+\varepsilon'$ and $\tilde{\mathbf J}(y)=\mathbf J(y)+\alpha\le \mathbf J(y)+\varepsilon$. So the condition \textbf{(PC)} holds with $\varepsilon'$, uniformly across profiles and pairs.
\end{proof}

\begin{theorem}[Choice of Social Cost Functions \cite{roberts_interpersonal_1980}, \cite{dAspremontGevers2002}]
\label{thm:swf-classification}
Let $\mathfrak{F} : \mathcal{J}^n \to \mathcal{R}$ be a Social Cost Functional satisfying \textbf{(IIA)} $+$ \textbf{(P)} (unless stated otherwise). We have the following results, for each level of interpersonal comparability.

\begin{description}

\item[(OLC):]  The SCFL $\mathfrak{F}$ is represented\footnote{An additional mild equity argument is needed, see~\cite{dAspremontGevers2002}.} by the SCF
\[
C(\mathbf{J}(x)) \;=\; \max_{i\in\mathcal{N}} J_i(x).
\]

\item[(CNC):]
    A SCFL that satisfies the axioms does not exist. However, if we assume Partial Independence\footnote{There exists a fixed reference outcome $x_0 \in \mathcal{X}$ such that for any two cost profiles $\mathbf{J}, \mathbf{J}' \in \mathcal{U}^n$ and for any subset $A \subset \mathcal{X}$, if $J_i(x)=J'_i(x) \quad \text{for all } x\in A \cup \{x_0\} \text{ and for all } i\in\mathcal{N}$, then the social preference relations induced by $\mathbf{J}$ and $\mathbf{J}'$ coincide on $A$, i.e., $\forall\,x,y \in A:\quad x \succsim_{\mathbf{J}} y \;\Longleftrightarrow\; x \succsim_{\mathbf{J}'} y.$} \textbf{(PI)} instead of \textbf{(IIA)}, then the SCFL $\mathfrak{F}$  is represented by the SCF
       \[
           C(\mathbf{J}(x)) \;=\; -\prod_{i\in\mathcal{N}} \Bigl[J_i(x_0) - J_i(x)\Bigr]^{c_i}, \qquad c_i>0.
       \]
       where $x_0$ is a fixed benchmark outcome such that $J_i(x_0) \ge J_i(x)$ for all $x$.

       \smallskip
\item[(CNC with $b_i=0$):]
       If all costs $J_i$ are strictly negative then the SCFL $\mathfrak{F}$ is represented by the SCF
        \[
            C(\mathbf{J}(x)) \;=\; -\prod_{i\in\mathcal{N}} \Bigl[\,-J_i(x)\Bigr]^{c_i}, \qquad c_i>0.
        \]
        
\item[(CUC):] 
The SCFL $\mathfrak{F}$ is represented by the SCF
\[
C(\mathbf{J}(x)) \;=\; \sum_{i\in\mathcal{N}} c_i\,J_i(x), \qquad c_i>0.
\]

\item[(CFC):] The SCFL $\mathfrak{F}$ is represented by the SCF

\begin{equation*}
\!C(\mathbf{J}(x)) = \frac{1}{n}\sum_{i\in\mathcal N} J_i(x)\;+\; g\left( 
\left[\begin{smallmatrix}
    J_1(x) - \frac{1}{n}\sum_{i\in\mathcal N} {J}(x) \\
    \vdots \\
    J_n(x) - \frac{1}{n}\sum_{i\in\mathcal N} {J}(x)
\end{smallmatrix}\right]
\right)
\end{equation*}
    with $g: \mathbb R^n \to \mathbb R$ homogeneous of degree 1.\footnote{$g$ is homogeneous of degree 1 if $g(\lambda x)=\lambda\,g(x)$ for all $\lambda>0$.} For example one could choose $\gamma \max(\cdot)$ with $0 \leq \gamma \leq 1$, thus reflecting a balance between efficiency and equity, nicely axiomatized in \cite{bossert}.

\end{description}

Additionally, if one requires the axiom of Anonymity \textbf{(A)}, then all the $c_i$ in \textbf{(CNC)} and \textbf{(CUC)} become equal, i.e. $c_i = c_j, \forall i,j \in \mathcal{N}$, and the function $g$ in \textbf{(CFC)} must be invariant to permutations of the agents.
\end{theorem}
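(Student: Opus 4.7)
The plan is to leverage Proposition~\ref{pro:welfarism} and Lemma~\ref{lemma:PC} to reduce the classification of SCFLs to the classification of continuous functions $C: \mathbb{R}^n \to \mathbb{R}$. For each comparability level, the invariance condition on $\mathfrak{F}$ translates into the requirement that $C(\phi_1(z_1), \ldots, \phi_n(z_n))$ is ordinally equivalent to $C(z_1, \ldots, z_n)$ for every admissible tuple $(\phi_1, \ldots, \phi_n)$. Axiom \textbf{(P)} adds that $C$ is strictly increasing in each argument. The proof for each case then consists of solving the resulting functional constraints on $C$, modulo a final step that identifies $C$ up to an ordinally-equivalent representative.

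For the two ``friendly'' cases \textbf{(CUC)} and \textbf{(CFC)}, the arguments are essentially constructive. Under \textbf{(CUC)}, invariance under independent additive shifts $b_i$ pins down $C$ (up to monotone transformation) as a function of first differences; the common-scale invariance under $a > 0$ combined with \textbf{(P)} and \textbf{(IIA)} then yields the Roberts-type linear representation $C = \sum_i c_i J_i$ with $c_i > 0$ \cite{roberts_interpersonal_1980}. Under \textbf{(CFC)}, invariance under common shifts implies that $C$ can be decomposed as $\bar{J} + g(\mathbf{J} - \bar{J}\mathbf{1})$, and invariance under common positive scaling forces $g$ to be homogeneous of degree one; a direct check shows that any such $C$ transforms as $aC + b$ under the common affine map, hence preserves the ranking.

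The \textbf{(OLC)} and \textbf{(CNC)} cases require additional care. In \textbf{(OLC)}, invariance under any common strictly increasing transformation collapses the informational content of $\mathbf{J}$ to the ordinal ranking of its components, so $C$ must be a rank-based (positional) rule; singling out $\max_i J_i$ among all such rules uses \textbf{(P)} together with the mild equity principle referenced in the footnote, following the standard Gevers-type maximin derivation in \cite{dAspremontGevers2002}. In \textbf{(CNC)}, one first observes the Arrow-style impossibility: independent affine rescalings together with \textbf{(IIA)} leave no meaningful cross-agent trade-off. Replacing \textbf{(IIA)} by the weaker \textbf{(PI)} restores comparability of gains $J_i(x_0) - J_i(x)$ (which are determined up to independent positive rescalings $a_i$), and the only \textbf{(P)}-consistent aggregator whose ordering is invariant under such rescalings is a Nash product of these gains with strictly positive exponents $c_i$; when $b_i = 0$ and the costs are strictly negative, the benchmark becomes redundant and the simpler Nash form applies.

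The main obstacle, in my view, is the \textbf{(OLC)} step from ``$C$ is rank-based and Pareto-monotone'' to ``$C = \max_i J_i$'': many admissible positional rules exist (leximin, any order statistic, and combinations thereof), and ruling them all out in favor of maximin is where the footnote's ``mild equity argument'' quietly does the heavy lifting, in combination with continuity from \textbf{(PC)}. The anonymity corollary is then immediate: Axiom \textbf{(A)} imposes symmetry of $C$ in its arguments coordinate-wise, which forces $c_i = c_j$ in the utilitarian and Nash forms and invariance of $g$ under permutations in the \textbf{(CFC)} representation.
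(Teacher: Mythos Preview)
The paper does not actually prove Theorem~\ref{thm:swf-classification}: it is stated as a compilation of results from \cite{roberts_interpersonal_1980} and \cite{dAspremontGevers2002}, and the text immediately moves on to Section~\ref{ssec:usesofW} without any argument. So there is no ``paper's own proof'' to compare against; the only in-paper technical contribution around this theorem is Lemma~\ref{lemma:PC}, which you correctly invoke to justify using Proposition~\ref{pro:welfarism}.

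Your sketch is a fair high-level outline of the standard arguments in those references, and the overall architecture (welfarism $\Rightarrow$ continuous $C$, then solve the invariance constraints case by case) is the right one. Two places where your wording is a bit loose relative to what the cited proofs actually do: in \textbf{(CUC)}, ``invariance under independent additive shifts pins down $C$ as a function of first differences'' is not the mechanism---it is the \emph{ordering} induced by $C$ that must be shift-invariant, and Roberts' argument goes through a separating-hyperplane/weak-dominance step rather than a direct functional reduction; and in \textbf{(CFC)}, the mean-plus-homogeneous decomposition is a consequence of Roberts' full characterization, not something one reads off from common-shift invariance alone. These are sketch-level imprecisions rather than errors. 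Your diagnosis of the \textbf{(OLC)} case---that continuity plus the footnoted equity condition are what single out $\max_i J_i$ among all positional rules---is accurate and is exactly where the cited literature does the work the paper delegates.
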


\subsection{Admissible Operations on Social Cost Functions}
\label{ssec:usesofW}

In many engineering problems, one is interested in using the social cost function to guide other operations rather than minimizing it to select the optimal outcome (Figure~\ref{fig:what-to-do-with-W}).
For example, it may be necessary to perform a quantitative comparison between two possible outcomes (that is, not necessarily selecting the optimal one, but also assessing relative performances). 
Moreover, in stochastic settings, it may be necessary to compute some statistics on the value of the SCF, like estimating confidence intervals and percentiles, means, variance, or other quantities useful in risk assessment and stochastic optimization.

One needs to be particularly careful about using SCFs for these purposes because Proposition~\ref{pro:welfarism} only certifies their use for selecting the best social choice. It is allowed, however, to perform other operations on the values of SCF for different outcomes, as long as the result of such operations is invariant with respect to the transformations listed in Section~\ref{subsec:E-comparability}, which guarantees that the result of such operations is meaningful.
This is stated in the following results.
\begin{lemma}\label{lemma:invariance}
    Consider $n$ values $(w_1,\dots,w_n) \in \mathbb{R}^n$. Suppose $q(w_1,\dots,w_n)$ is a rational function satisfying, $\forall\,a \in \mathbb{R}_{++},\;\forall\,b \in \mathbb{R}$,
    \begin{equation*}
        q(w_1,\dots,w_n) 
        \;=\;
        q(a\,w_1 + b,\;\dots,\;a\,w_n + b)    
    \end{equation*}
    Then $q$ is a rational function of the \emph{ratios of differences}:
    \begin{equation*}
        \frac{\,w_i - w_j\,}{\,w_k - w_\ell\,}, 
        \quad
        1 \le i,j,k,\ell \le n, 
        \quad (w_k - w_\ell \neq 0).  
    \end{equation*}
    Conversely, any rational expression built from these ratios of differences is invariant under 
    $t \mapsto a\,t + b$.
\end{lemma}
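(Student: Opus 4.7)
The converse direction is immediate: in any ratio $(w_i - w_j)/(w_k - w_\ell)$, the additive constant $b$ cancels in both numerator and denominator, and the scale factor $a$ also cancels, so each such ratio is itself invariant under $t \mapsto at+b$. Any rational function of invariant quantities is invariant, so any rational expression built from the listed ratios is invariant under common positive affine transformations.

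For the forward direction, the plan is to factor the action of the affine group into translations and positive scalings, and to handle them in that order. Using the invariance identity with $a=1$ and $b = -w_n$ gives
\[
q(w_1,\dots,w_n) \;=\; q(w_1 - w_n,\,\dots,\,w_{n-1} - w_n,\,0) \;=:\; \tilde q(d_1,\dots,d_{n-1}),
\]
where $d_i := w_i - w_n$. Hence $q$ descends to a rational function $\tilde q$ in $n-1$ variables. Applying the invariance with $b=0$ and arbitrary $a>0$ and re-expressing in the $d$ coordinates yields $\tilde q(a d_1,\dots,a d_{n-1}) = \tilde q(d_1,\dots,d_{n-1})$ for all $a > 0$, i.e.\ $\tilde q$ is homogeneous of degree $0$.

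The key technical step is to show that a rational function $\tilde q = P/Q$ (with $P,Q$ coprime polynomials) that is homogeneous of degree $0$ in $(d_1,\dots,d_{n-1})$ is a rational function of the ratios $d_i/d_{n-1}$. Write $P = \sum_k P_k$ and $Q = \sum_k Q_k$ as sums of homogeneous components. The identity $P(ad)Q(d) = P(d)Q(ad)$ is polynomial in $a$, so matching coefficients of $a^k$ gives $P_k Q = Q_k P$ for every $k$. Coprimality of $P$ and $Q$ forces each nonzero $P_k$ to be proportional to $P$ and each nonzero $Q_k$ to be proportional to $Q$, so $P$ and $Q$ must each be a single homogeneous component, and of the same degree $m$. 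Any homogeneous polynomial of degree $m$ in $(d_1,\dots,d_{n-1})$ equals $d_{n-1}^m$ times a polynomial in the ratios $(d_1/d_{n-1},\dots,d_{n-2}/d_{n-1})$, so $\tilde q = P/Q$ is a rational function of these ratios. Substituting back $d_i/d_{n-1} = (w_i - w_n)/(w_{n-1} - w_n)$ writes $q$ as a rational function of ratios of differences of the original $w_i$, which in turn are rational in the full family $(w_i - w_j)/(w_k - w_\ell)$.

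The main obstacle is the rigorous passage from ``degree-$0$ homogeneous rational function'' to ``rational function of pairwise ratios,'' because one must rule out pathological cancellations between $P$ and $Q$ that could mix homogeneous degrees; the coprimality argument together with positivity $a>0$ (which lets us treat the identity as a polynomial identity in $a$ over a dense set) is precisely what closes this gap. Every other step is a routine change of variables.
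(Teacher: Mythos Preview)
Your proof is correct, but it takes a somewhat different route from the paper's. The paper normalizes in a single step: on the open region where $w_1>w_n$ it chooses $a=1/(w_1-w_n)$ and $b=-aw_n$ simultaneously, so that the invariance identity becomes
\[
q(w_1,\dots,w_n)=q\!\left(1,\tfrac{w_2-w_n}{w_1-w_n},\dots,\tfrac{w_{n-1}-w_n}{w_1-w_n},0\right),
\]
which is already a rational expression in the ratios of differences; equality of rational functions on an open set extends to the whole domain. Your approach instead factors the affine group: first the translation $b=-w_n$ collapses $q$ to a function $\tilde q$ of the differences $d_i=w_i-w_n$, and then the scaling $a>0$ forces $\tilde q$ to be homogeneous of degree $0$, after which you invoke a coprimality argument to conclude that $P$ and $Q$ are individually homogeneous of the same degree. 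The paper's one-shot substitution exploits the double transitivity of the affine group on distinct reals and avoids the homogeneity lemma entirely, making it shorter. Your decomposition is more structural and makes explicit the separate roles of shift-invariance (producing differences) and scale-invariance (producing ratios), which is conceptually transparent and would adapt more readily to settings where only one of the two invariances holds.
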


\begin{proof}
As $q$ is a rational function, w.l.o.g we check the identity on the open region where $w_1 > w_n$. In this region, choose the parameters $a = \frac{1}{w_1 - w_n} > 0, \quad b = -a w_n.$ Substituting these into the invariance condition yields
\[
q(w_1,\dots,w_n)
\;=\;
q\Bigl(
1,
\frac{w_2 - w_n}{w_1 - w_n},
\dots,
\frac{w_{n-1} - w_n}{w_1 - w_n},
0
\Bigr).
\]
The right-hand side is a rational expression depending solely on the ratios $\frac{w_k - w_n}{w_1 - w_n}$ (or, equivalently, any ratio of differences $\frac{w_i - w_j}{w_k - w_\ell}$). This holds on the entire domain, as the equality holds on an open set and $q$ is a rational function. The converse is immediate.
\end{proof}

\begin{proposition}[Invariance of ratios of SCF differences]\label{proposition: invariance SCF}
    Let $\mathcal X' \subset \mathcal X$ be a finite set of outcomes $x$ and $\mathbf{J}$  a profile of cost functions. 
    Then, under \textbf{(CUC)} or \textbf{(CFC)}, a rational function of the admissible SCF evaluations $\left\{C(\mathbf{J}(x))\right\}_{x \in \mathcal X'}$ is invariant under the corresponding transformations if and only if it is a function of the ratios of differences 
    \begin{equation}
    \frac{C(\mathbf{J}(x)) - C(\mathbf{J}(z))}{C(\mathbf{J}(y)) - C(\mathbf{J}(w))}, \quad \text{with }x,y,z,w \in \mathcal X',
    \label{eq:admissibleoperations}
    \end{equation}
    or, in case of \textbf{(CNC)}, of the ratios $\dfrac{C(\mathbf{J}(x), \mathbf{J}(x_0))}{C(\mathbf{J}(y), \mathbf{J}(x_0))}, \ x,y \in \mathcal X'$ and $x_0$ being the reference point. 
\end{proposition}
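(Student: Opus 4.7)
The plan is to verify both directions of the equivalence by first computing the transformation induced on the scalar SCF value $w_x := C(\mathbf J(x))$ by the invariance transformations of each comparability level, and then applying Lemma~\ref{lemma:invariance} to the resulting family of transformations acting simultaneously on the tuple $(w_x)_{x\in\mathcal X'}$. The structural fact that unlocks everything is that, because the transformation parameters defining an invariance transformation are fixed across outcomes, the induced map on SCF values is \emph{common} to every $x\in\mathcal X'$.

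The first task is to establish this common-transformation property under \textbf{(CUC)} and \textbf{(CFC)}, using the explicit forms from Theorem~\ref{thm:swf-classification}. Under \textbf{(CUC)}, substituting $J_i\to aJ_i+b_i$ into $C(\mathbf J(x))=\sum_i c_iJ_i(x)$ yields $C(\mathbf J(x))\to a\,C(\mathbf J(x))+\sum_i c_ib_i$, and the shift $B:=\sum_i c_ib_i$ is manifestly independent of $x$. Under \textbf{(CFC)} I would write $C(\mathbf J(x))=\bar J(x)+g(\mathbf J(x)-\bar J(x)\mathbf 1)$ with $\bar J(x)=\tfrac{1}{n}\sum_i J_i(x)$; the common transformation $J_i\to aJ_i+b$ shifts the mean to $a\bar J(x)+b$ while scaling the deviations by $a$, and degree-one homogeneity of $g$ gives $C(\mathbf J(x))\to a\,C(\mathbf J(x))+b$. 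Thus in both cases the tuple $(w_x)_{x\in\mathcal X'}$ transforms by a \emph{common} positive affine map $w_x\mapsto aw_x+B$, and as the admissible parameters vary one sweeps out every $a>0$ and every $B\in\mathbb R$.

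With this in hand, the forward implication is a direct application of Lemma~\ref{lemma:invariance}: a rational function $q$ of the $w_x$ invariant under $w_x\mapsto aw_x+B$ for every $a>0$ and $B\in\mathbb R$ must be a rational function of ratios of differences $(w_x-w_z)/(w_y-w_w)$, which is exactly~\eqref{eq:admissibleoperations}. The converse is immediate because any such ratio of differences cancels both the common shift $B$ (through the subtraction) and the common factor $a$ (through the division). For \textbf{(CNC)} the argument is analogous but simpler: under $J_i\to a_iJ_i+b_i$ each difference $J_i(x_0)-J_i(x)$ is rescaled to $a_i(J_i(x_0)-J_i(x))$, so $C(\mathbf J(x),\mathbf J(x_0))\to A\cdot C(\mathbf J(x),\mathbf J(x_0))$ with $A:=\prod_i a_i^{c_i}>0$ a positive constant independent of $x$. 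The invariance condition on $q$ then reduces to invariance under the common positive rescaling $w_x\mapsto Aw_x$, and repeating the normalization step in the proof of Lemma~\ref{lemma:invariance} (now dividing by any fixed nonzero $w_y$ rather than by $w_1-w_n$) identifies $q$ with a rational function of the ratios $C(\mathbf J(x),\mathbf J(x_0))/C(\mathbf J(y),\mathbf J(x_0))$; the converse is again manifest.

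The only delicate step — and the main obstacle — is verifying the common-transformation property under \textbf{(CFC)}: it is precisely the degree-one homogeneity of the residual function $g$ in Theorem~\ref{thm:swf-classification} that allows the additive component $b$ to propagate cleanly to a common shift on $C$, rather than producing an outcome-dependent correction that would break the reduction to Lemma~\ref{lemma:invariance}. Once this structural fact and its multiplicative counterpart for \textbf{(CNC)} are confirmed, the proposition is essentially a restatement of Lemma~\ref{lemma:invariance} applied to the induced action on the vector of SCF values.
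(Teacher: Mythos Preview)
Your proposal is correct and follows essentially the same strategy as the paper: compute the induced action of each invariance class on $C$ (a common positive affine map under \textbf{(CUC)}/\textbf{(CFC)}, a common positive rescaling under \textbf{(CNC)}), then invoke Lemma~\ref{lemma:invariance} (with the obvious adaptation to pure rescalings for \textbf{(CNC)}). Your treatment is in fact slightly more detailed than the paper's --- you make explicit the role of degree-one homogeneity of $g$ in the \textbf{(CFC)} case and you verify that the induced parameters sweep out the full affine group, a point the paper leaves implicit; incidentally your shift constant $b$ under \textbf{(CFC)} is the correct one, whereas the paper's stated $nb$ is a harmless slip.
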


\begin{proof}
    We show that admissible transformations described in Section~\ref{subsec:E-comparability} induce an affine group action on the SCF $C$. 
    For \textbf{(CUC)} we obtain
    $C(\phi_\text{CUC}(\cdot)) = a C(\cdot) + \sum_{i} c_i b_i$.
    For \textbf{(CFC)} we obtain
    $C(\phi_\text{CFC}(\cdot)) = a C(\cdot) + n b$.
    For \textbf{(CNC)} we obtain
    $C(\phi_\text{CNC}(\cdot)) = (\prod_i a_i^{c_i}) C(\cdot). $
    Thus, Lemma \ref{lemma:invariance} can be applied to characterize the full set of rational functions that are invariant under such transformation, with the \textbf{(CNC)} reduced to ratios by picking $w_j = w_k$ and then dividing the ratio-of-differences by $w_k$.
\end{proof}
\smallskip


It should not be surprising that other operations on the SCF $C$ are not generally permissible: $C$ is a cardinal representation of the order imposed by the SCFL $\mathfrak F$. 
Therefore, additional measures, such as setting a fixed reference point or a fixed scale, are needed to make other operations (e.g., absolute differences or ratios) meaningful.
For instance, by selecting $z=w=v$ in \eqref{eq:admissibleoperations}, the ratios-of-differences takes the form 
\begin{equation}
t := \frac{C(\mathbf{J}(x)) - C(\mathbf{J}(v))}{C(\mathbf{J}(y)) - C(\mathbf{J}(v))},
\label{eq:ratioOfImprovements}
\end{equation}
which has an immediate interpretation: going from allocation $v$ to allocation $x$ is $t$ times better/worse than going from allocation $v$ to allocation $y$.

As an example, consider the use of Price of Anarchy (PoA) to evaluate and optimize system performance \cite{Chandan2024,Zhang2018,Piliouras2017,Wang2017,hill2023tradeoff}.
PoA quantifies (in)efficiency of a mechanism via the ratio between the value that the SCF takes at the worst-case Nash equilibrium and at the social optimum. However, in its classical form, PoA is not invariant to the affine transformations and one needs to replace it with a metric that is based on the admissible operations defined in Proposition \ref{proposition: invariance SCF}. 
One can adopt the ratio between the improvement of the Nash equilibrium and the improvement of the social optimum, both with respect to a benchmark solution (or, in the case of CNC, with respect to the reference point).

\section{Examples}
\label{sec:examples}

In this section, we illustrate how the methodology proposed in this paper is applicable to engineering problems comprising multiple agents competing for a limited resource.
We considered three timely domains: water allocation, traffic control, and curtailment of renewable energy.
We illustrate how a designer can decide the appropriate interpersonal comparability level, considering the available information and societal or political perceptions, and select an appropriate social cost function.

\subsection{Water Allocation}
\label{subsec:water}

Agricultural irrigation accounts for 70\% of global freshwater use already, and projections are that irrigation-based food production will need to grow another 50\% by 2050 due to climate change in combination with population growth~\cite{bwambale2022smart}. As groundwater reservoirs deplete~\cite{condon2021global}, water will need to be used more efficiently and prudently to avoid resource collapse~\cite{li2017irrigation}, and allocation/priority rules are needed to avoid a ``tragedy of the commons''~\cite{olorunfemi2023how}. 
Recent works have explored advanced control techniques for water allocation~\cite{negenborn2009distributed, castelletti2023model,valledesma2024water,wang2025stochastic}.

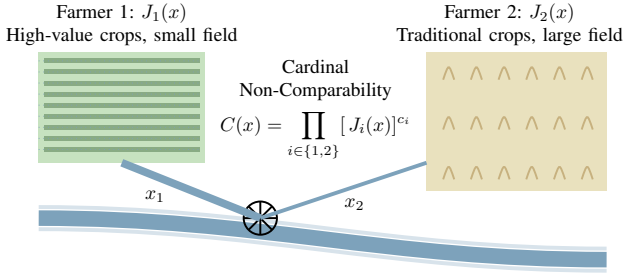
\begin{figure}[hbt!]
    \centering
\resizebox{0.98\columnwidth}{!}{
\begin{tikzpicture} 
\definecolor{waterblue}{RGB}{125, 162, 187}
\definecolor{fieldgreen}{RGB}{199, 226, 192}
\definecolor{cropgreen}{RGB}{137, 171, 132}
\definecolor{wheatyellow}{RGB}{233, 225, 190}
\definecolor{wheatgold}{RGB}{199, 177, 127}

\begin{scope}[shift={(0,0)}]
\fill[fieldgreen] (1,0) rectangle (4,2);
\foreach \y in {0.2,0.4,0.6,0.8,1.0,1.2,1.4,1.6,1.8} {
  \fill[cropgreen] (1.1,\y) rectangle (3.9,\y+0.08);
  \draw[waterblue, dotted] (1,\y+0.04) -- (3.9,\y+0.04);
}
\end{scope}

\begin{scope}[shift={(7,0)}]
\fill[wheatyellow] (1,-0.5) rectangle (4.25,2);
\foreach \x in {1.3,1.8,...,4} {
  \foreach \y in {-0.3,0.6,...,1.75} {
    \draw[wheatgold, line width=1pt] ($(\x,\y)$) .. controls ($(\x+0.1,\y+0.3)$) .. ($(\x+0.2,\y)$);
  }
}
\end{scope}

\draw[waterblue, line width=8pt] (1,-1) to[out=0,in=180] (11.25,-1.75);
\draw[waterblue!30, line width=2pt] (1,-0.8) to[out=0,in=180] (11.25,-1.55);
\draw[waterblue!30, line width=2pt] (1,-1.2) to[out=0,in=180] (11.25,-1.95);

\begin{scope}[shift={(5,-1)}]
\draw[black, line width=1pt] (0,0) circle (0.3);
\draw[black, line width=1pt] (-0.3,0) -- (0.3,0);
\draw[black, line width=1pt] (0,-0.3) -- (0,0.3);
\draw[black, line width=1pt] (-0.21,-0.21) -- (0.21,0.21);
\draw[black, line width=1pt] (-0.21,0.21) -- (0.21,-0.21);
\end{scope}

\draw[waterblue, line width=4pt] (5,-1) -- (2.5,0);
\draw[waterblue, line width=2pt] (5,-1) -- (8,0);

\node[font=\normalsize, align=center] at (6,1.5) {Cardinal\\Non-Comparability};
\node[font=\normalsize, align=center] at (6,0.5) {$ \displaystyle  C(x) = \prod_{i\in \{1,2\}} [\,J_i(x)]^{c_i}$};

\node[font=\normalsize, align=center] at (3.1,-0.6) {$x_1$};
\node[font=\normalsize, align=center] at (6.7,-0.75) {$x_2$};

\node[font=\normalsize, align=center] at (2.5,2.5) {Farmer 1: $J_1(x)$\\ High-value crops, small field};
\node[font=\normalsize, align=center] at (9.5,2.5) {Farmer 2: $J_2(x)$\\Traditional crops, large field};
\end{tikzpicture}}
\caption{Irrigation needs of heterogeneous farmland are difficult to compare.}
\label{fig:irrigation}
\end{figure}

\paragraph*{Comparability}

In irrigation systems, the value a farmer draws from a certain amount of allocated water is difficult to quantify, especially as agricultural farmland and resulting needs are heterogeneous due to crops requiring different amounts of water per hectare and across seasons, with all of it being precipitation dependent~\cite{bwambale2022smart,li2017irrigation}. Thus, one can only conclude that farmers generally experience increased benefits with higher water allocations, though comparing it across heterogeneous farmers is not appropriate~\cite{gomezlimon2020agricultural} (Figure~\ref{fig:irrigation}).
As stated in~\cite{madani2013exogenous}, regulators are interested in designing schemes that robustly satisfy social welfare and justice criteria despite this unmeasurable heterogeneity.
These considerations, in the context of comparability, correspond to Cardinal Non Comparability  \textbf{(CNC)}  of farmers' costs.

\paragraph*{Choice of SCF}
Under \textbf{(CNC)}, the appropriate social welfare function is the Nash Social Welfare. 
For the purpose of water allocation, farmers usually own water rights or water shares to cover the size of their land and account for the crop they are growing. In such a setting, the axiom of Anonymity \textbf{(A)} is not appropriate as the allocation must consider farmers' different access rights to water.
Thus, different exponents $c_i$ are allowed and appropriate in the SCF.

\paragraph*{Implications of the choice of SCF}
Consider a simplified example where every farmer $i$ has water rights $c_i$ and receives a proportion $x^{(i)}$ of the total available water $\bar X$. 
We assume farmers have an upfront cost every season $J_i(x_0)$ and draw marginal utility from an increased water allocation $q_i x^{(i)}$, so that the cost of farmer $i$ is given by $J_i(x) = J_i(x_0) - q_i x^{(i)}$.
The resulting SCF problem becomes 
\begin{align}\label{WaterAllocationProblem}
    \min_{\left\{x^{(i)}\right\}_{i\in\mathcal N}} & \quad -\prod_{i\in\mathcal N} [(q_i x^{(i)}) - J_i(x_0)]^{c_i} \\ \nonumber
    \subjectto  & \quad 0 \le x^{(i)} \quad \forall \,i,  \text{ positive allocation }\\ \nonumber
    & \quad \sum_{i\in\mathcal N} x^{(i)} = \bar X\quad \text{ total volume constraint}.
\end{align}

An interesting case arises when we solve for the optimal social outcome independent of the upfront costs (which the farmers may not disclose); thus, $J_i(x_0)$ (no water, i.e., $x_0^{(i)} = 0 \ \forall i$) acts as a natural worst-case benchmark, corresponding to a planner's choice not to consider upfront expenses. This is reasonable when only water rights should influence the allocation rule. Thus, the farmers themselves need to trade off their costs and marginal benefits with every water share they buy. This approach considers the practical difficulty for a social planner to verify and measure farmers' true expenses or worst-case states.

In such a case, the SCF becomes $-\prod_{i} \left[\,q_i x^{(i)}\right]^{c_i}$.
The solution of~\eqref{WaterAllocationProblem} then satisfies \emph{proportional fairness}~\cite{Kelly1998}, which coincides with the proportional allocation 
\begin{equation*}
x^{(i)*} = \frac{c_i}{\sum_{j\in\mathcal{N}} c_j} \; \bar X
\label{eq:waterproportionalallocation}
\end{equation*}
(see derivation in the appendix of \cite{welfarism-arxiv}).

Proportional allocation is a commonly used allocation procedure 
implemented in constituencies worldwide~\cite{ostrom1994rules, roagarcia2014equity,gomezlimon2020agricultural, rosegrant1994markets}.
Thus, using social choice theory arguments and analyzing the underlying notion of comparability, we can offer a different perspective on the commonly used proportional allocation rule in water irrigation systems.

\subsection{Traffic Control}
\label{subsec:traffic}

Fig.~\ref{fig:routing} shows an example of the commonly studied traffic routing problem~\cite{Chandan2024,Zhang2018,Piliouras2017,Wang2017} with $40$ \emph{long-distance commuters} travelling from $O_1$ through $O_2$ to $D$, and $40$ \emph{short-distance commuters} travelling from $O_2$ to $D$ only.
The link delays are as shown in Fig.~\ref{fig:network}.
We next discuss whether travel delay costs of both commuter types can be compared and illustrate the consequences of different comparability assumptions.

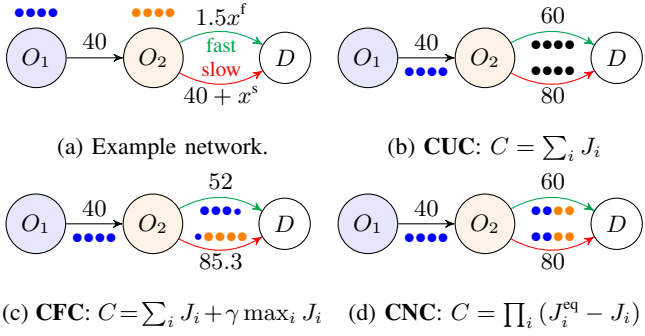
\begin{figure}[b!]
    \centering
    \begin{subfigure}[t]{0.24\textwidth}
        \centering
                \begin{tikzpicture}[
            node distance = 2.5cm,
            ->,
            >=stealth',
            font=\small,
        ]
            \node[state, minimum size = 0pt, fill=blue!10] (O1) {$O_1$};
            \node[state, minimum size = 0pt, right=0.75cm of O1, fill=orange!10] (O2) {$O_2$};
            \node[state, minimum size = 0pt, right=1cm of O2] (D) {$D$};

            \node[draw=none, above=0cm of O1] {\footnotesize \color{blue} $\bullet$$\bullet$$\bullet$$\bullet$};
            \node[draw=none, above=0cm of O2] {\footnotesize \color{orange} $\bullet$$\bullet$$\bullet$$\bullet$};
            
            \draw (O1) edge node[above]{$40$} (O2);
            \draw (O2) edge[bend left, draw=Green] node{\begin{tabular}{c} $1.5 x^\textup f$ \\[0.1mm] \footnotesize \color{Green}fast \end{tabular}} (D);
            \draw (O2) edge[bend right, draw=red] node{\begin{tabular}{c} \footnotesize \color{red}slow \\[0.1mm] $40 + x^\textup s$\end{tabular}} (D);
        \end{tikzpicture}
        \caption{Example network.}
        \label{fig:network}
    \end{subfigure}
    \hfill
    \begin{subfigure}[t]{0.235\textwidth}
        \centering
                \begin{tikzpicture}[
            node distance = 2.5cm,
            ->,
            >=stealth',
            font=\small,
        ]
            \node[state, minimum size = 0pt, fill=blue!10] (O1) {$O_1$};
            \node[state, minimum size = 0pt, right=0.75cm of O1, fill=orange!10] (O2) {$O_2$};
            \node[state, minimum size = 0pt, right=1cm of O2] (D) {$D$};
            
            \draw (O1) edge node{\begin{tabular}{c} $40$ \\[0.1mm] \footnotesize \color{blue} $\bullet$$\bullet$$\bullet$$\bullet$ \end{tabular}} (O2);
            \draw (O2) edge[bend left, draw=Green] node{\begin{tabular}{c} $60$ \\[0.1mm] \footnotesize $\bullet$$\bullet$$\bullet$$\bullet$ \end{tabular}} (D);
            \draw (O2) edge[bend right, draw=red] node{\begin{tabular}{c} \footnotesize $\bullet$$\bullet$$\bullet$$\bullet$ \\[0.1mm] $80$ \end{tabular}} (D);
        \end{tikzpicture}
        \caption{\textbf{CUC}: $C = \sum_i J_i$}
        \label{fig:route-CUC}
    \end{subfigure}
    
    \begin{subfigure}[t]{0.24\textwidth}
        \centering
                \begin{tikzpicture}[
            node distance = 2.5cm,
            ->,
            >=stealth',
            font=\small,
        ]
            \node[state, minimum size = 0pt, fill=blue!10] (O1) {$O_1$};
            \node[state, minimum size = 0pt, right=0.75cm of O1, fill=orange!10] (O2) {$O_2$};
            \node[state, minimum size = 0pt, right=1cm of O2] (D) {$D$};
            
            \draw (O1) edge node{\begin{tabular}{c} $40$ \\[0.1mm] \footnotesize \color{blue} $\bullet$$\bullet$$\bullet$$\bullet$ \end{tabular}} (O2);
            \draw (O2) edge[bend left, draw=Green] node{\begin{tabular}{c} $52$ \\[0.1mm] \footnotesize \color{blue} $\bullet$$\bullet$$\bullet$\raisebox{0.24mm}{\tiny$\bullet$} \end{tabular}} (D);
            \draw (O2) edge[bend right, draw=red] node{\begin{tabular}{c} \raisebox{0.24mm}{\tiny\color{blue}$\bullet$}\footnotesize\color{orange}$\bullet$$\bullet$$\bullet$$\bullet$ \\[0.1mm] $85.3$ \end{tabular}} (D);
        \end{tikzpicture}
        \caption{\textbf{CFC}: $C\!=\!\sum_i J_i + \gamma \max_i J_i$}
        \label{fig:route-CFC}
    \end{subfigure}
    \hfill
    \begin{subfigure}[t]{0.235\textwidth}
        \centering
                \begin{tikzpicture}[
            node distance = 2.5cm,
            ->,
            >=stealth',
            font=\small,
        ]
            \node[state, minimum size = 0pt, fill=blue!10] (O1) {$O_1$};
            \node[state, minimum size = 0pt, right=0.75cm of O1, fill=orange!10] (O2) {$O_2$};
            \node[state, minimum size = 0pt, right=1cm of O2] (D) {$D$};
            
            \draw (O1) edge node{\begin{tabular}{c} $40$ \\[0.1mm] \footnotesize \color{blue} $\bullet$$\bullet$$\bullet$$\bullet$ \end{tabular}} (O2);
            \draw (O2) edge[bend left, draw=Green] node{\begin{tabular}{c} $60$ \\[0.1mm] \footnotesize \color{blue} $\bullet$$\bullet$\color{orange}$\bullet$$\bullet$ \end{tabular}} (D);
            \draw (O2) edge[bend right, draw=red] node{\begin{tabular}{c} \footnotesize\color{blue}$\bullet$$\bullet$\color{orange}$\bullet$$\bullet$ \\[0.1mm] $80$ \end{tabular}} (D);
        \end{tikzpicture}
        \caption{\textbf{CNC}: $C = \prod_i \left(J_i^\textup{eq} - J_i\right)$}
        \label{fig:route-CNC}
    \end{subfigure}
    \caption{Traffic example illustrating different comparability assumptions and the associated optimal outcomes.}
    \label{fig:routing}
\end{figure}

\paragraph*{Comparability}
The prevalence of the sum of costs as SCF in the literature suggests that either \textbf{(CUC)} or \textbf{(CFC)} is implicitly assumed.
\textbf{(CUC)} is often assumed in conjunction with tolling solutions~\cite{yang1998principle}, in which the unit of comparison is the monetary value of time~\cite{zamparini2007meta}, and the commuters' dispensable incomes, i.e., the affine offsets $b_i$, are not considered in the cost functions.
Works that instead explicitly account for income differences\cite{jalota2021efficiency} can be classified under \textbf{(CFC)}.
\textbf{(CFC)} is also natural when considering the travel delays themselves as costs (and not how they convert to money), with the justification that everyone has 24 hours in a day.

One could instead assume that travel delays are non-comparable \textbf{(CNC)}, justified as follows.
The commuter heterogeneity is attributed to non-comparable trade-offs, e.g.,
long-distance commuters prioritize large housing over short commutes, and vice versa for short-distance commuters.

\paragraph*{Choice of SCF}
In addition to basic axioms \textbf{(P)}, \textbf{(IIA)}, Anonymity \textbf{(A)} is desired if commuters are to have equal access rights to roads. Therefore, under \textbf{(CUC)} the only appropriate SCF is $\sum_i J_i$, while a larger family of SCFs is permissible under \textbf{(CFC)} (see Table~\ref{table}).
To balance a trade-off between total delays and delays of the worst off, one can choose SCF $\sum_i J_i + \gamma \max J_i$.
In our example, $\gamma$ essentially dictates to what extent long-distance commuters should be compensated in $O_2 \rightarrow D$ for their additional delay in $O_1 \rightarrow O_2$.
Under \textbf{(CNC)}, since travel delay costs are nonnegative, one must resort to \emph{Partial Independence} \textbf{(PI)}, and choose a suitable reference outcome $x_0$.
A natural choice of $x_0$ is the non-controlled traffic equilibrium, since it represents the \emph{status quo} or \emph{disagreement point} if negotiations to adopt new control policies fail, and is commonly adopted in transportation to certify that policies are Pareto improving~\cite{daganzo2000pareto,elokda2024carma}.
This leads to SCF $\prod_i \left(J_i^\textup{eq} - J_i\right)$, with $J_i^\textup{eq}$ denoting the equilibrium delay to commuter $i$.

\paragraph*{Implications of the choice of SCF}
The SCF can be used in an optimization formulation to determine the socially optimal traffic outcome.
Figs.~\ref{fig:route-CUC}--\ref{fig:route-CNC} illustrate the results in our example network, indicating the optimal link delays and associated allocations of long-distance (blue) and short-distance (orange) commuters\footnote{The allocations are to be interpreted in a frequentist sense, e.g., in Fig.~\ref{fig:route-CNC} each commuter uses the fast/slow link half of the times.}.
Notice that under \textbf{(CUC)}, it does not matter which commuter uses the fast/slow link (indicated black in Fig.~\ref{fig:route-CUC}).
While these different allocations may appear intuitively more or less fair or efficient, we emphasize that the appropriate notion of ``fairness and efficiency'' is underpinned by the assumed and justifiable level of comparability.

\subsection{Energy Curtailment}
\label{subsec:energy}

Electrical power distribution grids host an ever-increasing amount of renewable power generation (e.g., residential solar panels). 
These grids have finite power transfer capacity, dictated by the physical limits of the infrastructure, which is expensive and sometimes impossible to reinforce.
Therefore, access needs to be regulated: when overproduction occurs and power needs to be exported from these generators to the rest of the grid, some of the generation needs to be curtailed (up to 10\% of the power generated by new installations \cite{NOVAN2024102930}).

In mathematical terms, for a pool of $N$ generators, the grid operators need to decide the power curtailments $\{x_i\}_{i=1}^N$ for each generator $i$. In deciding that, they need to ensure that the grid operational constraints (voltage and line current limits) are satisfied when generators produce $p_i - x_i$, where $p_i$ is their current potential production.
Such a feasibility problem leaves the grid operators to decide what is the ``best'' curtailment: on the one hand, the transition towards a sustainable power system calls for maximizing the total generation from renewable sources (in some cases, by law \cite{EEG2017}). On the other hand, the energy system is a shared infrastructure, and each generator expects fair access to it. See \cite{soares2024review} for a review of fairness in energy systems and \cite{DallAnese2014,gebbran2021fair, lusis2019reducing, liu2020fairness, Borbath2024, Moring2024Fair-Over-TimeCoordination} for concrete proposals for fair curtailment strategies. These works propose multiple solutions but provide little guidance about which one to select, except for an \emph{a posteriori} quantitative assessment of various fairness indices, which are equally hard to choose and justify.

\paragraph*{Comparability}

Different comparability assumptions are possible. We give some examples.

One could argue that each MW of curtailed power is fundamentally comparable and identical (for example, because the generators are in the same price zone and incur the same financial cost), implying $J_i(x) = x_i$ and \textbf{(CUC)}.

Alternatively, one can acknowledge the heterogeneous nature of these small stakeholders (residential users, solar farms, etc.) and conclude that no cardinal comparability of the financial cost is possible. However, it is possible to postulate when two generators are treated equitably, and assume \textbf{(OLC)}.
For example, with $J_i(x) = x_i$, \textbf{(OLC)} means that two generators are treated equally if the same amount of power is curtailed, as they incur the same financial cost.
If instead $J_i(x) = x_i/p_i$, where $p_i$ is the power that generator $i$ would be able to produce, then \textbf{(OLC)} means that generators are treated equally if the same fraction of power is curtailed, as they incur the same financial cost normalized to their earning opportunity and their investments.
Finally, if $J_i(x) = x_i - p_i$, then \textbf{(OLC)} means that generators exporting (i.e., selling) the same amount of power to the grid are treated equally, as they are using the grid equally.

It is a duty of the designer of the curtailment policy to interpret the mandate they received from the stakeholders and from the local regulations.
Arguably, it is easier to extract these sentiments by comparing and selecting a comparability notion than by evaluating curtailment policies or analyzing their effect once deployed.

\paragraph*{Choice of SCF}

The aforementioned comparability levels dictate which SCF is appropriate to use. \textbf{(CUC)} leaves no other choice than the utilitarian approach of maximizing $\sum_i x_i$ (which corresponds to \emph{OPF-total} in \cite{liu2020fairness}).
Under \textbf{(OLC)}, the appropriate SCF is necessarily $\max_i J_i(x)$. 
When $J_i(x) = x_i$, we recover the \emph{Egalitarian} curtailment policy proposed in \cite{gebbran2021fair}. When $J_i(x) = x_i/p_i$, we recover the \emph{OPF-generation} policy in \cite{liu2020fairness} (also similar to the \emph{Proportional} policy in \cite{gebbran2021fair}). Finally, when $J_i(x) = x_i - p_i$, we recover the \emph{Uniform Dynamic} policy in \cite{gebbran2021fair} and the \emph{OPF-export} policy in \cite{liu2020fairness}.

\paragraph*{Implications of the choice of SCF}

Different comparability assumptions justify different policies that have been proposed in the literature, but have not been adequately motivated. 
Explainability is crucial in this application, as the financial consequences are significant (e.g., the utilitarian approach results in a ``water filling'' solution where generators are completely curtailed starting from those connected further away from the main grid).
Figure~\ref{fig:curtailment-policies} provides some intuition on the different curtailment policies. We refer to \cite{gebbran2021fair} for a comparison of the numerical solutions to the resulting optimization problems on a benchmark distribution grid.

Inequitable and opaque access to the electricity grid exacerbate existing social disparities \cite{Brockway2021}, hinders investments \cite{Cuenca2023}, and may not meet the legal mandate of DSO as transparent and neutral players \cite{EU2019_944,CERRE2022_DSO}.

\begin{figure}[tb!]
    \centering
    \includegraphics[width=\linewidth]{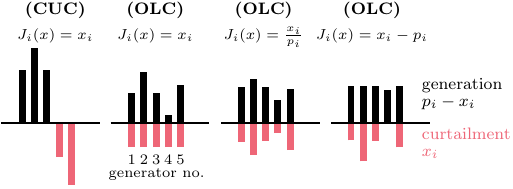}
    \caption{The effect of different curtailment policies (justified by different comparability assumptions) on five generators. Curtailing generators 4 and 5 is the most efficient way to satisfy the grid operational constraints. Figure adapted from \cite{gebbran2021fair}.}
    \label{fig:curtailment-policies}
\end{figure}

Besides providing a principled motivation for different curtailment policies, the proposed approach warns against using fairness indices like those reviewed in \cite{soares2024review, lusis2019reducing} (e.g., Gini index, Jain index, F-fairness, K-fairness). While they are useful to measure inequality, they may have unintended consequences when used as social cost functions, as they violate basic axioms like \emph{Pareto optimality} of the solution.

\section{Conclusions and Outlook}
\label{sec:conclusions}

Much of applied control work has implicitly been performed with the interests of the population in mind and based on ``welfarist'' principles as we laid out in this paper. We aimed to provide guidance for making the foundations of the use of social cost functions explicit and, thus, to enable control theorists to express and defend their objectives based on the limits/possibilities of interpersonal comparability in various contexts and applications.
In some cases, some social cost functions have to be ruled out, while many functions are permissible in other situations, and our theory thus may restrict or make explicit the range of control objectives that an engineer may pursue. 
In practice, the designer should first state the admissible level of interpersonal comparability, then select a social cost function from the class that respects this choice, and report both as part of the problem specification. This turns an implicit convention into an explicit specification and allows one to understand and examine the fairness and efficiency trade-offs.

Multiple questions remain open and should be investigated to enable an effective use of this social-choice-theory perspective in control design problems. We list three directions.
\begin{itemize}
    \item \textbf{Closed-loop certificates} -- 
    The examples presented in Section~\ref{sec:examples} are static decision problems. In a control problem, one has to design a feedback policy and then analyze its behavior in the closed-loop interconnection with the plant, taking into account stochastic disturbances, parametric uncertainty, and measurement noise. Closed-loop certificates for the transient and tracking performance need to be derived, employing operations like those proposed in Section~\ref{ssec:usesofW}.
    \item \textbf{Online SCF optimization} -- In dynamic settings, exogenous information is incrementally revealed to the decision maker, agents' preferences are time-varying, and present decisions affect future trajectories. Social choice results need to be combined with computationally tractable dynamic programming tools to design principled online policies \cite{Kulkarni2020ChangingPreferences,Freeman2017DynamicSettings}.
    \item \textbf{Intermediate comparability notions} -- In realistic settings, the level of comparability between agents' costs may be revealed through noisy measurements, prior beliefs, and users' feedback. The standard classification offered by social choice theory is too coarse to capture these settings, and the theory needs to be developed to allow principled social decisions in the face of more nuanced comparability levels.    
\end{itemize}

\bibliographystyle{IEEEtran}
\bibliography{bibliography,bib_MOO}


\end{document}